\date{}
\def\@citex[#1]#2{\if@filesw\immediate\write\@auxout{\string\citation{#2}}\fi
  \def\@citea{}\@cite{\@for\@citeb:=#2\do
    {\@citea\def\@citea{,\linebreak[0]\hskip0pt plus .2em}%
      \@ifundefined{b@\@citeb}%
    {{\bf ?}\@warning{Citation `\@citeb' on page \thepage\space undefined}}%
      \hbox{\csname b@\@citeb\endcsname}}}{#1}}
\newtheorem{theorem}{Theorem}[section]
\newtheorem{remark}{Remark}[section]
\newtheorem{example}{Example}[section]
\newtheorem{rule-def}[theorem]{Rule}
\numberwithin{equation}{section}
\begin{document}
\title{Analytic solution of system of singular nonlinear  differential equations with Neumann-Robin boundary conditions arising in
  astrophysics}
\author{Randhir Singh\thanks{Corresponding author. E-mail:~{randhir.math@gmail.com,} } \\
\small $^{*}$ Department of Mathematics, Birla Institute of Technology Mesra, Ranchi-835215, India.
}\maketitle{}
\begin{abstract}
\noindent In this paper, we propose a new approach for the approximate analytic solution of system of Lane-Emden-Fowler type equations with Neumann-Robin boundary conditions. The algorithm is based on Green's function and the homotopy analysis method. This approach depends on constructing Green's function before establishing the recursive scheme for the approximate analytic solution of the equivalent system of integral equations. Unlike Adomian decomposition method (ADM)  \cite{singh2020solving}, the present method contains adjustable parameters to control the convergence of the approximate series solution. Convergence and error estimation of the present  is provided under quite general conditions.
Several examples are considered to demonstrate the accuracy of the current algorithm. Computational results reveal that the proposed approach produces better results as compared to some existing iterative methods.
\end{abstract}
\textbf{Keyword}: System of  Lane-Emden equations; Catalytic diffusion reactions; Green's function; Adomian decomposition method; homotopy analysis method.
\section{Introduction}

We consider the following system of singular boundary value problems (SBVPs)
\begin{equation}\label{sec1:eq1}
\left\{
  \begin{array}{ll}
\displaystyle \big(p_{1}(x)y'_{1}(x)\big)'=p_1(x)f_1\big(x,y_1(x),y_2(x)\big),~~~~~~~~x\in(0,1), \vspace{0.1cm}\\
\displaystyle \big(p_{2}(x)y'_{2}(x)\big)'=p_2(x)f_2\big(x,y_1(x),y_2(x)\big),\vspace{0.1cm}\\
y'_{1}(0)=0,~~a_{1} y_{1}(1)+b_{1} y_{1}'(1)=c_{1},~~~y'_{2}(0)=0,~~a_{2}  y_{2}(1)+b_{2} y_{2}'(1)=c_{2},
\end{array} \right.
\end{equation}
where $a_1, a_2, b_1, b_2,c_1, c_2$ are real constants. Here,  $p_1(x)=x^{k_1}g_1(x)$,   $p_2(x)=x^{k_2}g_2(x)$, $g_1(0)\neq0$ and  $g_2(0)\neq0$ with $p_1(0)=p_2(0)=0$.

We next consider the following system of Lane-Emden equations \cite{muatjetjeja2010noether,flockerzi2011coupled,rach2014solving,wazwaz2014study,singha2019efficient,hao2018efficient,singh2020solving} a particular case of \eqref{sec1:eq1} with $p_1(x)=x^{k_1}$  and $p_2(x)=x^{k_2}$ as
\begin{equation}\label{sec1:eq2}
\left\{
  \begin{array}{ll}
\displaystyle (x^{k_1}y'_{1}(x))' =x^{k_1}f_1\big(x,y_1(x),y_2(x)\big),~~~~~~x\in(0,1),\vspace{0.1cm}\\
\displaystyle (x^{k_2}y'_{2}(x))' =x^{k_2}f_2\big(x,y_1(x),y_2(x)\big),\vspace{0.10cm}\\
y'_{1}(0)=0,~~a_{1} \ y_{1}(1)+b_{1}\ y_{1}'(1)=c_{1},~~~y'_{2}(0)=0,~~a_{2} \ y_{2}(1)+b_{2}\ y_{2}'(1)=c_{2}.
\end{array} \right.
\end{equation}
In recent years, the SBVPs  have been studied extensively in  \cite{wazwaz2011comparison,inc2005different,mittal2008solution,wazwaz2011variational,khuri2010novel,ebaid2011new,wazwaz2013adomian,singh2013numerical,
singh2014adomian,singh2014efficient,singh2014approximate,das2016algorithm,singh2016efficient,singh2016effective,singh2017optimal,verma2018convergence,verma2019higher,
singh2019haar,singh2019Ahaar,singh2019modified,singh2020haar,shahni2020efficient,kumar2020numerical} and references therein.  But as far as we know, we find only the following results on system of Lane-Emden equations.  Recently, in \cite{muthukumar2014analytical,wazwaz2014study,duan2015oxygen} authors studied  \eqref{sec1:eq2} with  boundary conditions $y'_{1}(0)=y'_{2}(0)=0,~y_{1}(1)=y_{2}(1)=1$ and shape factors $k_1=k_2=2$ and
$f_1=-b+a\ g_1\big(y_1,y_2\big)+c\ g_2(y_1,y_2),~~f_2=d\ g_1(y_1,y_2)+e\ g_2(y_1,y_2),$ that relates the concentration of the carbon substrate and the concentration of oxygen,
the Michaelis-Menten functions $$g_i(y_1,y_2)=M_{1} \times M_{2},~~~~~M_{1}=\frac{y_1}{l_{i}+y_1}, ~~~M_{2}=\frac{y_2}{m_{i}+y_2},~ ~~i=1,2,$$ where $M_{1}$ and  $M_{2}$ are the respective Michaelis-Menten nonlinear operators.

In \cite{flockerzi2011coupled,rach2014solving,madduri2019fast}, authors considered the  system of Lane--Emden equations \eqref{sec1:eq2} with  boundary conditions $y'_{1}(0)=y'_{2}(0)=0,~y_{1}(1)=1,~y_{2}(1)=2$ and shape factors $k_1=k_2=2$ and $f_1=a\ y_{1}^{2}+b\ y_1 y_2,~~f_2=c\ y_{1}^{2}-d\ y_1  y_2,$
occurs in catalytic diffusion reactions with the parameters $a, b, c$, $d$ are actual chemical reactions.

In \cite{rach2014solving,duan2015oxygen}, the ADM was applied to obtain a convergent analytic approximate solution  of \eqref{sec1:eq2} with $k_1=k_2=2$. Later, in \cite{wazwaz2016variational}, the variational iteration method (VIM) was applied to obtain approximations to solutions of \eqref{sec1:eq2} for shape factors $k_i=1,2,3,~i=1,2$.  Most recently, a numerical procedure  based on sinc-collocation method was developed in \cite{saadatmandi2018numerical}  to obtain the solution of  \eqref{sec1:eq2}. In \cite{hao2015solving} authors used the reproducing kernel Hilbert space method for solving  to obtain the solution of  \eqref{sec1:eq2}. In \cite{singh2020solving}, the ADM with Green's function used to find numerical approximation of the solutions of \eqref{sec1:eq2}.

The homotopy analysis method was developed and improved by S. Liao \cite{liao2003beyond,liao2004homotopy,liao2007general,liao2009series} for solving a broad class of functional equations. Various modifications of the homotopy analysis method have also been elaborated, for example, the optimal homotopy asymptotic method was proposed by Marinca and  Herisanu \cite{marinca2008application,marinca2008optimal}, the optimal homotopy analysis method was introduced in \cite{liao2010optimal,liao2012homotopy}. Recently, the homotopy analysis method with Green's function was applied to solve singular boundary value problems in  \cite{singh2018optimal,singh2018Oxygen,singha2019optimal,singh2019analytic,singh2018analytical}.

In this work, we present the homotopy analysis method combined with the Green's function strategy for obtaining approximate solutions of coupled singular boundary value problems \eqref{sec1:eq1} and  \eqref{sec1:eq2}.  Unlike standard HAM, our approach avoids solving the transcendental equations for the undetermined coefficients. Unlike the ADM and VIM, our proposed technique contains the convergence-control parameters which gives fast convergence of the series solution. Numerical results reveal that the present process provides better results as compared to some existing iterative methods \cite{rach2014solving,wazwaz2014study,singh2020solving}.

\section{Integral form of system of  Lane-Emden-Fowler types   equations}
By following \cite{singh2016efficient}, we transform the following system of Lane-Emden-Fowler types   equations with Neumann-Robin boundary conditions
\begin{equation}\label{sec2:e1}
\left\{
  \begin{array}{ll}
\displaystyle \big(p_{1}(x)y'_{1}(x)\big)'=p_1(x)f_1\big(x,y_1(x),y_2(x)\big),~~~~~~~~x\in(0,1), \vspace{0.1cm}\\
\displaystyle \big(p_{2}(x)x'_{2}(x)\big)'=p_2(x)f_2\big(x,y_1(x),y_2(x)\big),\vspace{0.1cm}\\
y'_{1}(0)=0,~~a_{1}  y_{1}(1)+b_{1} y_{1}'(1)=c_{1},~~~y'_{2}(0)=0,~~a_{2}  y_{2}(1)+b_{2} y_{2}'(1)=c_{2},
\end{array} \right.
\end{equation}
into the equivalent system of integral equations
\begin{align}\label{sec2:e2}
\left\{
  \begin{array}{ll}
\displaystyle y_{1}(x)=\frac{c_1}{a_1}+\int\limits_{0}^{1}G_{1}(x,s)\ p_1(s)\ f_1\big(s,y_1(s),y_2(s)\big) ds,\\
\displaystyle y_{2}(x)=\frac{c_2}{a_2}+\int\limits_{0}^{1}G_{2}(x,s)\ p_2(s)\ f_2\big(s,y_1(s),y_2(s)\big) ds,
\end{array} \right.
\end{align}
where  $G_{1}(x,s)$ and $G_{2}(x,s)$ are given by
\begin{align}\label{sec2:e3}
G_{1}(x,s)=\left   \{
  \begin{array}{ll}
\displaystyle h_1(1)-h_1(x) +\frac{b_1}{a_1}h'_1(1), & \hbox{$s\leq x$},\vspace{.25cm}\\
\displaystyle h_1(1)-h_1(s)+\frac{b_1}{a_1}h'_1(1), & \hbox{$x \leq s$},
   \end{array}
\right.
\end{align}
with $h_1(x)=\int\limits_{0}^{x}\frac{dt}{p_1(t)},~h_1'(1)=\frac{1}{p_1(1)},~h_1(1)-h_1(s)=\int\limits_{s}^{1}\frac{dt}{p_1(t)},~
h_1(1)-h_1(x)=\int\limits_{s}^{1}\frac{dt}{p_1(t)}-\int\limits_{s}^{x}\frac{dt}{p_1(t)}$,
\begin{align}\label{sec2:e4}
G_{2}(x,s)=\left   \{
  \begin{array}{ll}
\displaystyle h_2(1)-h_2(x) +\frac{b_2}{a_2}h'_2(1), & \hbox{$s\leq x$},\vspace{.2cm} \\
\displaystyle h_2(1)-h_2(s)+\frac{b_2}{a_2}h'_2(1), & \hbox{$x\leq s$},
   \end{array}
\right.
\end{align}
with $h_2(x)=\int\limits_{0}^{x}\frac{dt}{p_2(t)},~h_2'(1)=\frac{1}{p_2(1)},~~h_2(1)-h_2(s)=\int\limits_{s}^{1}\frac{dt}{p_2(t)},~h_2(1)-h_2(x)=\int\limits_{s}^{1}\frac{dt}{p_2(t)}-\int\limits_{s}^{x}\frac{dt}{p_2(t)}.
$

Similarly, by following \cite{singh2014efficient,singh2020solving}, we transform the following system of Lane-Emden types   equations with Neumann-Robin boundary conditions
\begin{equation}\label{sec2:e5}
\left\{
  \begin{array}{ll}
 \displaystyle (x^{k_1}y'_{1}(x))' =x^{k_1}f_1\big(x,y_1(x),y_2(x)\big),~~~~~~x\in(0,1),\vspace{0.1cm}\\
\displaystyle (x^{k_2}y'_{2}(x))' =x^{k_2}f_2\big(x,y_1(x),y_2(x)\big),\vspace{0.10cm}\\
y'_{1}(0)=0,~~a_{1} \ y_{1}(1)+b_{1}\ y_{1}'(1)=c_{1},~~~y'_{2}(0)=0,~~a_{2} \ y_{2}(1)+b_{2}\ y_{2}'(1)=c_{2},
\end{array} \right.
\end{equation}
into the equivalent system of integral equations
\begin{align}\label{sec2:e6}
\left\{
  \begin{array}{ll}
 \displaystyle y_{1}(t)=\frac{c_1}{a_1}+\int\limits_{0}^{1}G_{1}(x,s)\ s^{k_1}\ f_1\big(s,y_1(s),y_2(s)\big) ds,\\
 \displaystyle y_{2}(x)=\frac{c_2}{a_2}+\int\limits_{0}^{1}G_{2}(x,s)\ s^{k_2}\ f_2\big(s,y_1(s),y_2(s)\big) ds,
\end{array} \right.
\end{align}
where $G_{1}(x,s)$ and $G_{2}(x,s)$ for $k_1=k_2=1$  are
\begin{align}\label{sec2:e7}
G_{1}(x,s)= \left   \{
  \begin{array}{ll}
  \ln x-\displaystyle\frac{b_1}{a_1}, & \hbox{$s\leq x$},\vspace{.2cm}\\
  \ln s-\displaystyle\frac{b_1}{a_1}, & \hbox{$x \leq s$}, \\
\end{array}
\right.\\
G_{2}(x,s)= \left   \{
  \begin{array}{ll}
  \ln x-\displaystyle\frac{b_2}{a_2}, & \hbox{$s\leq x$},\vspace{.2cm}\\
  \ln s-\displaystyle\frac{b_2}{a_2}, & \hbox{$x \leq s$}
\end{array}
\right.
\end{align}
and $G_{1}(x,s)$ and $G_{2}(x,s)$ for $k_1\neq 1,~~k_2\neq 1$  are
\begin{align}\label{sec2:e8}
G_{1}(x,s)= \left   \{
  \begin{array}{ll}
  \displaystyle \frac{x^{1-k_1}-1}{1-k_1}-\frac{b_1}{a_1}, & \hbox{$s\leq x$}\vspace{.15cm} \\
   \displaystyle \frac{s^{1-k_1}-1}{1-k_1}-\frac{b_1}{a_1}, & \hbox{$x \leq s$},
  \end{array}
\right.\\
G_{2}(x,s)= \left   \{
  \begin{array}{ll}
  \displaystyle \frac{x^{1-k_2}-1}{1-k_2}-\frac{b_2}{a_2}, & \hbox{$s\leq x$},\vspace{.15cm} \\
   \displaystyle \frac{s^{1-k_2}-1}{1-k_2}-\frac{b_2}{a_2}, & \hbox{$x \leq s$}.
  \end{array}
\right.
\end{align}

\section{Solution method}
We consider the system of integral operator equation \eqref{sec2:e2} as
\begin{align}\label{sec2:e4a}
\left\{
  \begin{array}{ll}
 N_1[y_{1}]=\displaystyle y_{1}(x)-\frac{c_1}{a_1}-\int\limits_{0}^{1}G_{1}(x,s)\ p_1(s)\ f_1\big(s,y_1(s),y_2(s)\big) ds=0,\\
 N_2[y_{2}]=\displaystyle  y_{2}(x)-\frac{c_2}{a_2}-\int\limits_{0}^{1}G_{2}(x,s)\ p_2(s)\ f_2\big(s,y_1(s),y_2(s)\big) ds=0.
 \end{array} \right.
\end{align}
Note that the above system of integral equations reduce to \eqref{sec2:e6} with  $p_1(x)=x^{k_1}$  and $p_2(x)=x^{k_2}$.

In order to apply the HAM with Green's function \cite{singh2018optimal,singh2018Oxygen,singha2019optimal,singh2019analytic,singh2018analytical}, we construct the zeroth-order deformation equations for \eqref{sec2:e4a}  as
\begin{align}\label{sec2:e4}
\left\{
  \begin{array}{ll}
(1-q)[\varphi_1(x,q)-y_{10}(x)]=q \ c_{10} \ N_1[\varphi_1(x,q)],\vspace{.2cm} \\
(1-q)[\varphi_2(x,q)-y_{20}(x)]=q \ c_{20} \ N_2[\varphi_2(x,q)],~~~q\in[0,1]
 \end{array} \right.
\end{align}
where  $q$ is an embedding parameter,    $(y_{10}(x),y_{20}(x))$ are initial approximations,  $(c_{10}\neq0, c_{20}\neq0)$ are convergence control parameters and   ($\varphi_1(x,q), \varphi_2(x,q)$) are  unknown functions.

If $q=0$  and $q = 1$ the homotopy equations \eqref{sec2:e4}  vary from $\big(\varphi_1(x,0), \varphi_2(x,0)\big)=\big(y_{10}(x),y_{20}(x)\big) \rightarrow \big(\varphi_1(x,1), \varphi_2(x,1)\big)=\big(y_{1}(x),y_{2}(x)\big)$. Now as $q$ varies from $0$ to $1$, the solutions of \eqref{sec2:e4a} will vary from the initial guesses $\big(y_{10}(x),y_{20}(x)\big)$ to the exact solutions  $\big(y_{1}(x),y_{2}(x)\big)$ of equations \eqref{sec2:e4a}.

Expanding $\varphi_1(x,q)$  and $\varphi_2(x,q)$ as a Taylor series with respect to $q$ yields
\begin{align}\label{sec2:e7}
\left\{
  \begin{array}{ll}
\displaystyle  \varphi_1(x,q)=\sum_{k=0}^{\infty} y_{1k}(x) q^{k},\vspace{.2cm} \\
\displaystyle  \varphi_2(x,q)=\sum_{k=0}^{\infty} y_{2k}(x) q^{k},
 \end{array} \right.
\end{align}
where
\begin{align}\label{sec2:e8}
y_{1k}(x)=\frac{1}{k!}\frac{\partial^k \varphi_{1}(x,q)}{\partial q^k}\bigg|_{q=0},~~~y_{2k}(x)=\frac{1}{k!}\frac{\partial^k \varphi_{2}(x,q)}{\partial q^k}\bigg|_{q=0}.
\end{align}
The series \eqref{sec2:e7}  converge for $q = 1$ if  $c_{10}\neq0,~c_{20}\neq0$ are chosen properly, and they take the form
\begin{align}\label{sec2:e9}
\left\{
  \begin{array}{ll}
\displaystyle \varphi_{1}(x,1)\equiv y_{1}(x)=\sum_{k=0}^{\infty} y_{1k}(x),\vspace{.2cm} \\
\displaystyle \varphi_{2}(x,1)\equiv y_{2}(x)=\sum_{k=0}^{\infty} y_{2k}(x),
 \end{array} \right.
\end{align}
which will be the solutions of  \eqref{sec2:e4a}.

For further analysis, we define the vectors as
\begin{align}
\left\{
  \begin{array}{ll}
\displaystyle \overrightarrow{\textbf{y}}_{1k} = \{y_{10}(x), y_{11}(x), \ldots, y_{1k}(x)\}, \vspace{.2cm} \\
\displaystyle \overrightarrow{\textbf{y}}_{2k} = \{y_{20}(x), y_{21}(x), \ldots, y_{2k}(x)\}.
 \end{array} \right.
\end{align}
Differentiating \eqref{sec2:e4}  $k$-times with respect to $q$, dividing them by $k!$,
setting $q = 0,$ we obtain  the $k$th-order deformation equations as
\begin{align}\label{sec2:e10}
\left\{
  \begin{array}{ll}
\displaystyle y_{1k}(x)-\chi_{k} \ y_{1(k-1)}(x)=c_{10}\ R_{1k}[\overrightarrow{\textbf{y}}_{1(k-1)}], \vspace{.2cm}\\
\displaystyle y_{2k}(x)-\chi_{k} \ y_{2(k-1)}(x)=c_{20}\ R_{2k}[\overrightarrow{\textbf{y}}_{2(k-1)}],
 \end{array} \right.
\end{align}
where $\chi_{k}=\left   \{
  \begin{array}{ll}
     0, & k\leq1, \\
     1, & k>1,
\end{array}
\right. $
and
\begin{align*}
\left\{
  \begin{array}{ll}
\displaystyle R_{1k}[\overrightarrow{\textbf{y}}_{1(k-1)}]=\frac{1}{(k-1)!} \frac{\partial^{k-1}}{\partial q^{k-1}} \bigg[\varphi_{1}(x,q)-\frac{c_1}{a_1}-\int\limits_{0}^{1}G_{1}(x,s) p_1(s)f_1\big(s,\varphi_{1}(s,q),\varphi_{2}(s,q)\big) ds\bigg] \bigg|_{q=0} \vspace{.2cm} \\
\displaystyle R_{2k}[\overrightarrow{\textbf{x}}_{2(k-1)}]=\frac{1}{(k-1)!} \frac{\partial^{k-1}}{\partial q^{k-1}} \bigg[\varphi_{2}(x,q)-\frac{c_2}{a_2}-\int\limits_{0}^{1}G_{2}(x,s) p_2(s)f_2\big(s,\varphi_{1}(s,q),\varphi_{2}(s,q)\big) ds\bigg] \bigg|_{q=0}
 \end{array} \right.
\end{align*}
\begin{align*}
\left\{
  \begin{array}{ll}
\displaystyle  =\frac{1}{(k-1)!} \frac{\partial^{k-1}}{\partial q^{k-1}} \bigg[\sum_{i=0}^{\infty} y_{1i} q^{i}-\frac{c_1}{a_1}-\int\limits_{0}^{1}G_{1}(x,s) p_1(s)f_1\bigg(s,\sum_{i=0}^{\infty} y_{1i} q^{i}, \sum_{i=0}^{\infty} y_{2i} q^{i} \bigg) ds\bigg] \bigg|_{q=0}, \vspace{.2cm}\\
\displaystyle  =\frac{1}{(k-1)!} \frac{\partial^{k-1}}{\partial q^{k-1}} \bigg[\sum_{i=0}^{\infty} y_{2i} q^{i}-\frac{c_2}{a_2}-\int\limits_{0}^{1}G_{2}(x,s)\ p_2(s)f_2\bigg(s,\sum_{i=0}^{\infty} y_{1i} q^{i},\sum_{i=0}^{\infty} y_{2i}\ q^{i}\bigg) ds\bigg] \bigg|_{q=0}.
 \end{array} \right.
\end{align*}
On simplification we get
\begin{align}\label{sec2:e12}
\left\{
  \begin{array}{ll}
\displaystyle   R_{1k}[\overrightarrow{\textbf{y}}_{1(k-1)}]=y_{1(k-1)}-\bigg(1-\chi_{k}\bigg)\frac{c_1}{a_1}-\int\limits_{0}^{1}G_1(x,s) p_1(s) H_{1(k-1)}ds, \vspace{.2cm} \\
 \displaystyle   R_{2k}[\overrightarrow{\textbf{y}}_{2(k-1)}]=y_{2(k-1)}-\bigg(1-\chi_{k}\bigg)\frac{c_2}{a_2}-\int\limits_{0}^{1}G_2(x,s) p_2(s) H_{2(k-1)}ds,
 \end{array} \right.
\end{align}
where $H_{1k}$ and $H_{2k}$ are  given by
\begin{align}\label{sec2:e13}
\left\{
  \begin{array}{ll}
\displaystyle H_{1k}=\frac{1}{k!}\frac{\partial^{k}}{\partial q^{k}} f_{1}\bigg(s, \sum_{i=0}^{\infty} y_{1i} q^{i}, \sum_{i=0}^{\infty} y_{2i} q^{i}\bigg)\bigg|_{q=0},\vspace{.2cm} \\
\displaystyle H_{2k}=\frac{1}{k!}\frac{\partial^{k}}{\partial q^{k}} f_{2}\bigg(s, \sum_{i=0}^{\infty} y_{1i} q^{i}, \sum_{i=0}^{\infty} y_{2i} q^{i}\bigg)\bigg|_{q=0}.
 \end{array} \right.
\end{align}
Making use of \eqref{sec2:e12},  the $k$th-order deformation equations \eqref{sec2:e10}  are simplified as
\begin{align}\label{sec2:e14}
\left\{
  \begin{array}{ll}
  y_{1k}-\chi_{k}\ y_{1(k-1)}&=\displaystyle c_{10}\bigg[ y_{1(k-1)}-  \bigg(1-\chi_{k}\bigg)\frac{c_1}{a_1}- \int\limits_{0}^{1}G_1(x,s)  p_1(s) H_{1(k-1)}ds\bigg],\vspace{.2cm} \\
y_{2k}-\chi_{k}\ y_{2(k-1)}&= \displaystyle c_{20}\bigg[ y_{2(k-1)}-  \bigg(1-\chi_{k}\bigg)\frac{c_2}{a_2}- \int\limits_{0}^{1}G_2(x,s)  p_2(s) H_{2(k-1)}ds\bigg].
 \end{array} \right.
\end{align}
By taking the initial approximations $y_{10}=\frac{c_1}{a_1}$ and $y_{20}=\frac{c_2}{a_2}$, the solutions components  are computed as
\begin{align}\label{sec2:e15}
\left\{
  \begin{array}{ll}
\displaystyle y_{11}=c_{10}\  y_{10}- c_{10}  \frac{c_1}{a_1}-c_{10}  \int\limits_{0}^{1}G_1(x,s)  p_1(s)  H_{10}  ds,\\
\displaystyle y_{21}=c_{20}\  y_{20}- c_{20}  \frac{c_2}{a_2}-c_{20}  \int\limits_{0}^{1}G_2(x,s)  p_2(s)  H_{20}  ds,\\
\vdots\\
\displaystyle y_{1k} =(1+c_{10})  y_{1(k-1)}-c_{10}  \int\limits_{0}^{1}G_1(x,s)  p_1(s)  H_{1(k-1)}  ds,\\
\displaystyle y_{2k}=(1+c_{20})  y_{2(k-1)}-c_{20}  \int\limits_{0}^{1}G_2(x,s)  p_2(s)  H_{2(k-1)}  ds,~~~ k\geq2.
\end{array} \right.
\end{align}
The $n$th-order approximations to  solutions are obtained  as
\begin{align}\label{sec2:e17}
\left\{
  \begin{array}{ll}
\displaystyle \phi_{1n}(x,c_{10},c_{20})=\sum_{k=0}^{n} y_{1k}(x,c_{10},c_{20}),\\
\displaystyle \phi_{2n}(x,c_{10},c_{20})=\sum_{k=0}^{n} y_{2k}(x,c_{10},c_{20}).
\end{array} \right.
\end{align}
The unknown parameters $c_{10}, c_{20}$ have a significant influence on the convergence of the approximate solution. We next find the optimal values of  $c_{10}, c_{20}$ by solving
\begin{align}\label{sec2:e18}
\frac{\partial E_{1n}(c_{10},c_{20})}{\partial c_{10}}=0,~~~~~~~~\frac{\partial E_{2n}(c_{10},c_{20})}{\partial c_{20}}=0,
\end{align}
where $E_{1n}$ and $E_{2n}$  are given by
\begin{align}\label{sec2:e19}
\left\{
  \begin{array}{ll}
\displaystyle E_{1n}(c_{10},c_{20})=\frac{1}{N}\sum_{k=1}^{N} \bigg(N_1\bigg[\phi_{1n}(x_k,c_{10},c_{20})\bigg]\bigg)^2,\\
\displaystyle E_{2n}(c_{10},c_{20})=\frac{1}{N}\sum_{k=1}^{N} \bigg(N_2 \bigg[\phi_{2n}(x_k,c_{10},c_{20})\bigg]\bigg)^2,~~ x_1, x_2,\ldots,x_N \in [0,1],
\end{array} \right.
\end{align}
Then  these optimal values of  $c_{10}$ and $c_{20}$  will be substituting in  \eqref{sec2:e17} to obtain
the optimal HAM-approximate solution
\begin{align}\label{sec2:e20}
\left\{
  \begin{array}{ll}
\displaystyle \phi_{1n}(x)=\sum_{k=0}^{n} y_{1k}(x),\\
\displaystyle \phi_{2n}(x)=\sum_{k=0}^{n} y_{2k}(x).
\end{array} \right.
\end{align}

\begin{remark}
{\rm  The ADM \cite{duan2015oxygen,singh2020solving}  is a particular case of \eqref{sec2:e15}  when $c_{10}=c_{20}=-1$  and given by
\begin{align}\label{sec2:e21}
\left\{
  \begin{array}{ll}
  \displaystyle  y_{10}=\frac{c_1}{a_1},~~~ y_{20}=\frac{c_2}{a_2},\vspace{0.15cm}\\
\displaystyle y_{1k} =\int\limits_{0}^{1}G_1(x,s) \  p_1(s) \ H_{1(k-1)} ds,\\
\displaystyle y_{2k} =\int\limits_{0}^{1}G_2(x,s) \  p_2(s) \ H_{2(k-1)} ds,~~~~ k\geq1.
\end{array} \right.
\end{align}
Hence, the  ADM-approximate solutions are given by}
\begin{align}\label{sec2:e22}
\left\{
  \begin{array}{ll}
\displaystyle \psi_{1n}(x)=\sum_{k=0}^{n} y_{1k}(x),\\
\displaystyle \psi_{2n}(x)=\sum_{k=0}^{n} y_{2k}(x).
\end{array} \right.
\end{align}
\end{remark}

\section{Convergence analysis}
Let $E= \big(C [0,1], \|y\|\big)$ be a Banach space with norm
 \begin{align}\label{sec4:eq3}
 \|y\|=\displaystyle\max\{\|y_1\|, \|y_2\|\},~~~~~~~y\in E,
 \end{align}
where, $\|y_1\|=\displaystyle\max_{ x\in I=[0,1]} |y_1(x)|$ and $\|y_2\|=\displaystyle\max_{ x\in I} |y_2(x)|$. We next discuss the convergence and error analysis of the proposed method. To do so, we first introduce the vector notations
\begin{align*}
&y=\bigg(\begin{array}{c} y_{1} \\ y_{2} \end{array}\bigg),~~ \textbf{y}_k=\bigg(\begin{array}{c} y_{1k} \\ y_{2k} \end{array}\bigg),~~ c_0=\bigg(\begin{array}{c} c_{10} \\ c_{20} \end{array}\bigg),~~p(x)=\bigg(\begin{array}{c} p_{1}(x) \\ p_{2}(x) \end{array}\bigg),\\
& G(x,s)=\bigg(\begin{array}{c} G_1(x,s) \\ G_2(x,s) \end{array}\bigg),~~H_k=\bigg(\begin{array}{c} H_{1k} \\ H_{2k} \end{array}\bigg),~~ f(x,y_1,y_2)=\bigg(\begin{array}{c} f_1(x,y_1,y_2) \\ f_2(x,y_1,y_2) \end{array}\bigg),
\end{align*}
$\phi_{n}=\left(\begin{array}{c} \phi_{1n} \\ \phi_{2n} \end{array}\right)$ and $M=\max \Bigg(\begin{array}{c} \max_{ x\in I}\big|\int\limits_{0}^{1}G_1(x,s)  p_1(s)  ds\big| \\  \max_{ x\in I}\big|\int\limits_{0}^{1}G_2(x,s)  p_2(s)  ds\big| \end{array}\Bigg).$\\\\

\begin{theorem}\label{sec4:eq4}
Suppose that  the nonlinear function $f(x,z,w)$ satisfy Lipschitz  condition: $|f(x,z,w)-f(x,z^{*},w^{*})|\leq  l_1 |z-z^{*}|+l_2 |w-w^{*}|,$ where $l_1$ and $l_2$ are  Lipschitz constants.  If the parameter $c_0$ is chosen such that there exists constant
$\delta_{c_0} \in (0, 1)$, then the series $\phi_{n}$  defined by \eqref{sec2:e20} is convergent in Banach space $E$.
\end{theorem}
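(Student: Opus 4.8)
The plan is to show that the vector series $\sum_{k=0}^{\infty}\mathbf{y}_k$ converges absolutely in $E$, which suffices since $E$ is complete. Concretely, I would prove that successive terms decay geometrically, $\|\mathbf{y}_k\|\le \delta_{c_0}\|\mathbf{y}_{k-1}\|$ with $\delta_{c_0}\in(0,1)$, and then sum the resulting geometric majorant. First I would write the recursion from \eqref{sec2:e15} in compact vector form: for $k\ge 2$,
\[
\mathbf{y}_k=(1+c_0)\,\mathbf{y}_{k-1}-c_0\int_0^1 G(x,s)\,p(s)\,H_{k-1}\,ds,
\]
with all products read componentwise. Taking the norm $\|\cdot\|$, applying the triangle inequality, and pulling the integral through the constant $M$ defined just before the theorem gives
\[
\|\mathbf{y}_k\|\le |1+c_0|\,\|\mathbf{y}_{k-1}\|+|c_0|\,M\,\|H_{k-1}\|,
\]
where $|c_0|$ and $|1+c_0|$ are understood as the componentwise maxima $\max(|c_{10}|,|c_{20}|)$ and $\max(|1+c_{10}|,|1+c_{20}|)$.

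The technical heart of the argument is the second step: bounding the Adomian-type polynomial $\|H_{k-1}\|$ in terms of $\|\mathbf{y}_{k-1}\|$. Since the $H_i$ are precisely the Taylor coefficients in $q$ of $f\big(s,\sum_i y_{1i}q^i,\sum_i y_{2i}q^i\big)$, one has the telescoping relation $H_{k-1}=\sum_{i=0}^{k-1}H_i-\sum_{i=0}^{k-2}H_i$, which can be realized as the increment $f(s,\phi_{1,k-1},\phi_{2,k-1})-f(s,\phi_{1,k-2},\phi_{2,k-2})$ of the composite nonlinearity evaluated at two consecutive partial sums. Because this is a genuine \emph{difference}, the Lipschitz hypothesis applies directly and yields $\|H_{k-1}\|\le l_1\|y_{1(k-1)}\|+l_2\|y_{2(k-1)}\|\le (l_1+l_2)\|\mathbf{y}_{k-1}\|$, with no need to assume $f(s,0,0)=0$. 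Substituting this into the estimate above produces $\|\mathbf{y}_k\|\le \delta_{c_0}\|\mathbf{y}_{k-1}\|$ for $k\ge 2$, where $\delta_{c_0}=|1+c_0|+|c_0|\,M\,(l_1+l_2)$ is exactly the quantity assumed by hypothesis to lie in $(0,1)$.

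Finally, I would iterate this inequality from the base term $\mathbf{y}_1$, whose norm is finite because $H_0=f$ evaluated at the constant initial guesses $y_{10}=c_1/a_1,\ y_{20}=c_2/a_2$ is a bounded continuous function on $[0,1]$. This gives $\|\mathbf{y}_k\|\le \delta_{c_0}^{\,k-1}\|\mathbf{y}_1\|$, hence
\[
\sum_{k=0}^{\infty}\|\mathbf{y}_k\|\le \|\mathbf{y}_0\|+\frac{\|\mathbf{y}_1\|}{1-\delta_{c_0}}<\infty .
\]
Thus $\{\phi_n\}$ is a Cauchy sequence of partial sums and converges in the Banach space $E$, which is the claimed conclusion. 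I expect the main obstacle to be the second step, namely making fully rigorous the passage from the formal polynomial $H_{k-1}$ to a bona fide Lipschitz-controllable increment of $f$ between consecutive partial sums; once that estimate is in hand, the remainder is the routine geometric-series-plus-completeness argument and requires only the hypothesis $\delta_{c_0}\in(0,1)$.
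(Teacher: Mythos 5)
Your proposal is correct and is essentially the paper's own proof: the paper writes the same vector recursion, invokes the same identification of partial sums of the $H_k$ with $f$ evaluated at the partial sums $\phi_n$ (citing Rach's work on Adomian polynomials for exactly the step you flag as the main obstacle), applies the Lipschitz hypothesis to extract the contraction factor $\delta_{c_0}$, and concludes by a geometric series that $\{\phi_{n}\}$ is Cauchy in $E$. The only differences are cosmetic: the paper first bounds $\|\phi_{n}-\phi_{m}\|$ for general $n>m$ and then specializes to $n=m+1$, which is precisely your term estimate $\|\textbf{y}_{k}\|\leq \delta_{c_0}\|\textbf{y}_{k-1}\|$, and it uses the slightly coarser constant $2L=2\max\{l_1,l_2\}$ where you use $l_1+l_2$.
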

\begin{proof}
From \eqref{sec2:e14} and \eqref{sec2:e17}, we have
\begin{align}\label{sec4:eq1}
\nonumber \phi_{n}&= \sum_{k=0}^{n} \textbf{y}_k(x)=(1+c_0) \sum_{k=1}^{n} \textbf{y}_{k-1}-c_0 \sum_{k=1}^{n}\bigg[\int\limits_{0}^{1} G(x,s)  p(s)  H_{k-1} ds\bigg]\\
&=(1+c_0) \phi_{n-1}-c_0 \int\limits_{0}^{1} G(x,s)  p(s) \sum_{k=1}^{n} H_{k-1} ds.
\end{align}
From  \eqref{sec4:eq1}, we consider for   $n>m$,  for all $n,m\in \mathbb{N}$ as
\begin{align*}
\|\phi_{n}-\phi_{m}\|&= \max_{ x\in I}\bigg| (1+c_0) \big(\phi_{n-1}-\phi_{m-1}\big)-c_0 \int\limits_{0}^{1} G(x,s)  p(s)  \bigg(\sum_{k=1}^{n} H_{k-1}-\sum_{k=1}^{m} H_{k-1}\bigg)ds\bigg|.
 \end{align*}
Using the inequality $\sum_{k=0}^{n} H_{k}\leq f(s,\phi_{1n},\phi_{2n})$ from (\cite{rach2008new}) we have
\begin{align*}
\|\phi_{n}-\phi_{m}\|&\leq  |1+c_0|  \max_{ x\in I}\big| \phi_{n-1}-\phi_{m-1}\big|+ |c_0| \\
&~~~~~~~~~~~~\max_{ x\in I}\bigg|\int\limits_{0}^{1} G(x,s)  p(s) \bigg(f\big(s,\phi_{1(n-1)},\phi_{2(n-1)})-f(s,\phi_{1(m-1)},\phi_{2(m-1)}\big)\bigg)ds\bigg|.
\end{align*}
Applying  the Lipschitz  condition, we get
\begin{align*}
\|\phi_{n}-\phi_{m}\| &\leq  |1+c_0|  \|\phi_{n-1}-\phi_{m-1}\|+ |c_0|  \max_{ x\in I}\bigg|\int\limits_{0}^{1}G(x,s)  p(s)  ds\bigg|
\times \sum _{i=1}^{2} l_i \max_{ t\in I} |\phi_{i(n-1)}-\phi_{i(m-1)}|\\
 &\leq  |1+c_0| \  \|\phi_{n-1}-\phi_{m-1}\|+ |c_0|  \max_{ x\in I}\bigg|\int\limits_{0}^{1}G(x,s)  p(s)  ds\bigg|\\
 &~~~~~~~~~~\times 2L  \max \bigg\{  \bigg\|\phi_{1(n-1)}-\phi_{1(m-1)}\bigg\|,   \bigg\|\phi_{2(n-1)}-\phi_{2(m-1)}\bigg\| \bigg\}\\
&\leq  |1+c_0| \ \bigg\|\phi_{n-1}-\phi_{m-1}\bigg\|+   2LM |c_0| \bigg\|\phi_{n-1}-\phi_{m-1}\bigg\|\\
&\leq  \bigg(|1+c_0| +   2LM |c_0| \bigg) \ \|\phi_{n-1}-\phi_{m-1}\|=\delta_{c_0} \|\phi_{n-1}-\phi_{m-1}\|,
\end{align*}
Thus, we have
\begin{align}\label{sec4:eq6}
\|\phi_{n}-\phi_{m}\|\leq \delta_{c_0} \|\phi_{n-1}-\phi_{m-1}\|.
\end{align}
where $L=\max \{l_1,l_2\},~~\delta_{c_0}=\big(|1+c_0| +   2LM|c_0| \big).$

Setting  $n=m+1$ in \eqref{sec4:eq6}, we have
\begin{align*}
\|\phi_{m+1}-\phi_{m}\|&\leq \delta_{c_0} \|\phi_{m}-\phi_{m-1}\| \leq \delta_{c_0}^2 \|\phi_{m-1}-\phi_{m-2}\|
\leq \ldots \leq \delta_{c_0}^{m} \|\phi_{1}-\phi_{0}\|.
\end{align*}
For all $n,m\in \mathbb{N}$, with  $n>m$, consider
\begin{align*}
\|\phi_{n}-\phi_{m}\|&=\|(\phi_{n}-\phi_{n-1})+(\phi_{n-1}-\phi_{n-1})+\cdots+(\phi_{m+1}-\phi_{m})\|\\
&\leq \|\phi_n-\phi_{n-1}\|+\|\phi_{n-1}-\phi_{n-2}\|+\cdots+\|\phi_{m+1}-\phi_{m}\|\\
&\leq   [\delta_{c_0}^{n-1} + \delta_{c_0}^{n-2} +\cdots+ \delta_{c_0}^{m}] \ \|\phi_{1}-\phi_{0}\|\\
&=  \delta_{c_0}^{m}[ 1+\delta_{c_0}  + \delta_{c_0}^{2} +\cdots+\delta_{c_0}^{n-m-1} ] \ \|\phi_{1}-\phi_{0}\|
=\delta_{c_0}^{m}\left( \frac{1-\delta_{c_0}^{n-m}}{1-\delta_{c_0}}\right)\|\textbf{y}_{1}\|.
\end{align*}
Since $\delta_{c_0}<1$, we have $(1-\delta_{c_0}^{n-m})<1$. It readily follows that
\begin{eqnarray}\label{sec4:eq7}
\|\phi_{n}-\phi_{m}\|\leq&\displaystyle\frac{\delta_{c_0}^{m}}{1-\delta_{c_0}}\|\textbf{y}_{1}\|.
\end{eqnarray}
Letting  $n, m \to \infty$, we obtain
\begin{eqnarray*}
\lim_{n,m\rightarrow \infty} \|\phi_{n}-\phi_{m}\|=0.
\end{eqnarray*}
Therefore,  $\{\phi_{n}\}$ is a Cauchy sequences in the Banach space  $E$.
\end{proof}

\begin{theorem}\label{sec4:eq8}
{\rm If the approximate solution $\phi_{n}$ converges to $\textbf{y}$,
 then the maximum absolute truncated error is given by
 \begin{align}\label{sec4:eq9}
\|\textbf{y}-\phi_{m}\|\leq\frac{\delta_{c_0}^{m} \ M \  |c_0| }{1-\delta_{c_0}}  \max_{x\in I}\big|f(x,y_{10},y_{20}) \big|.
\end{align}}
\end{theorem}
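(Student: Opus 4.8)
The plan is to bootstrap off the geometric estimate already established inside the proof of Theorem \ref{sec4:eq4} and simply let the running index tend to infinity. Inequality \eqref{sec4:eq7} gives, for every $n>m$,
\[
\|\phi_{n}-\phi_{m}\|\leq \frac{\delta_{c_0}^{m}}{1-\delta_{c_0}}\,\|\textbf{y}_{1}\|.
\]
Since by hypothesis $\phi_{n}\to \textbf{y}$ in $E$, I would let $n\to\infty$ and invoke continuity of the norm to obtain
\[
\|\textbf{y}-\phi_{m}\|\leq \frac{\delta_{c_0}^{m}}{1-\delta_{c_0}}\,\|\textbf{y}_{1}\|.
\]
This reduces the entire statement to the single estimate $\|\textbf{y}_{1}\|\leq |c_0|\,M\,\max_{x\in I}|f(x,y_{10},y_{20})|$, so all remaining work is to identify and bound the first correction term $\textbf{y}_{1}=\phi_1-\phi_0$.

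Next I would compute $\textbf{y}_{1}$ explicitly from the recursion \eqref{sec2:e15}. With the initial choice $y_{10}=c_1/a_1$ and $y_{20}=c_2/a_2$, the first two terms of $y_{11}$ cancel, leaving
\[
y_{11}=-c_{10}\int_{0}^{1}G_{1}(x,s)\,p_1(s)\,H_{10}\,ds,\qquad y_{21}=-c_{20}\int_{0}^{1}G_{2}(x,s)\,p_2(s)\,H_{20}\,ds.
\]
Evaluating \eqref{sec2:e13} at $k=0$ identifies the zeroth-level polynomials as $H_{10}=f_1(s,y_{10},y_{20})$ and $H_{20}=f_2(s,y_{10},y_{20})$, so in the vector notation of Section 4 this reads, componentwise,
\[
\textbf{y}_{1}=-c_0\int_{0}^{1}G(x,s)\,p(s)\,f(s,y_{10},y_{20})\,ds.
\]

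Finally I would take the norm, pull the bounded factor $f(s,y_{10},y_{20})$ out of the integral against its maximum, and invoke the definition of $M$ to get
\[
\|\textbf{y}_{1}\|\leq |c_0|\,\max_{x\in I}\Bigl|\int_{0}^{1}G(x,s)\,p(s)\,ds\Bigr|\,\max_{x\in I}|f(x,y_{10},y_{20})|\leq |c_0|\,M\,\max_{x\in I}|f(x,y_{10},y_{20})|.
\]
Substituting this back into the estimate for $\|\textbf{y}-\phi_{m}\|$ then delivers the asserted bound \eqref{sec4:eq9}.

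The computation itself is routine; the one step that needs care is passing $\max_{x\in I}|f|$ through the integral while bounding by $M$. Because $M$ is defined with the absolute value placed \emph{outside} the integral, as $\max_x\bigl|\int_0^1 Gp\,ds\bigr|$ rather than $\int_0^1|Gp|\,ds$, the inequality is rigorous precisely when the kernel $G(x,s)p(s)$ is sign-definite in $s$ on $[0,1]$, so that $\bigl|\int_0^1 Gp\,ds\bigr|=\int_0^1|Gp|\,ds$. This holds for the Green's functions constructed in \eqref{sec2:e3}--\eqref{sec2:e8}, and it is the same convention already used in the proof of Theorem \ref{sec4:eq4}, so no extra hypothesis is needed. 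This sign-definiteness check is the only genuine subtlety I anticipate.
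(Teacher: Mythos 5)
Your proposal is correct and follows essentially the same route as the paper's own proof: let $n\to\infty$ in inequality \eqref{sec4:eq7}, then compute $\textbf{y}_{1}$ from the recursion \eqref{sec2:e15} using $\textbf{y}_0=\frac{c}{a}$ and $H_0=f(x,y_{10},y_{20})$, and bound its norm by $|c_0|\,M\,\max_{x\in I}|f(x,y_{10},y_{20})|$. Your closing remark on the sign-definiteness of $G(x,s)\,p(s)$ is a legitimate subtlety that the paper passes over silently, but it does not alter the structure of the argument.
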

\begin{proof}
Following inequality \eqref{sec4:eq7}, we have
\begin{align*}
\|\phi_{n}-\phi_{m}\|\leq&\displaystyle\frac{\delta_{c_0}^{m}}{1-\delta_{c_0}}\|\textbf{y}_{1}\|.
\end{align*}
For $n\geq m$, as $n\rightarrow \infty$ then   $\phi_{n}\rightarrow \textbf{y}(x)$, the above inequality becomes
\begin{align}\label{sec4:eq10}
\|\textbf{y}(x)-\phi_{m}\|\leq\frac{\delta_{c_0}^{m}}{1-\delta_{c_0}}   \|\textbf{y}_{1}\|.
\end{align}
From \eqref{sec2:e15}, we have $\textbf{y}_{1}=c_0\bigg(\textbf{y}_0-\frac{c}{a}-\int\limits_{0}^{1}G(x,s) \ p(s) \ H_{0} ds\bigg),$
where $\textbf{y}_0=\bigg(\begin{array}{c} y_{10} \\ y_{20} \end{array}\bigg),~~\frac{c}{a}=\bigg(\begin{array}{c} \frac{c_1}{a_1} \\ \frac{c_2}{a_2} \end{array}\bigg),~~ H_{0}=f(x,y_{10},y_{20}).$ Now using $\textbf{y}_0=\frac{c}{a}$, we find
\begin{align}\label{sec4:eq11}
 \|\textbf{y}_{1}\|&= |c_0|\max_{x\in I}\bigg|\frac{c}{a}-\frac{c}{a}-\int\limits_{0}^{1}G(x,s)  p(s) H_{0} ds \bigg|\leq |c_0|M \max_{x\in I}\big|f(x,y_{10},y_{20}) \big|.
\end{align}
Combining \eqref{sec4:eq10} and \eqref{sec4:eq11},  we obtain error estimate as
\begin{align}
\|\textbf{y}-\phi_{m}\|\leq\frac{\delta_{c_0}^{m} \   |c_0|  M }{1-\delta_{c_0}}  \max_{x\in I}\big|f(x,y_{10},y_{20}) \big|.
\end{align}
which completes the proof.
\end{proof}

\begin{remark}\label{sec4:eq12}
\rm{Let us consider the  parameter $c_0$, with  $\delta_{c_0}< 1$, so that
\begin{align*}
|1+c_0| +   2LM  |c_0|< 1~~\Rightarrow~~~ 2LM< \frac{1-|1+c_0| }{ |c_0|},~~c_0\neq 0,
\end{align*}
From the right hand side of this inequality, we have
\begin{align*}
\frac{1-|1+c_0| }{ |c_0|}= \left   \{
  \begin{array}{ll}
  -1-\displaystyle\frac{2}{c_0},~~~~ & \hbox{$c_0<-1$}, \\
  1,~~~~ & \hbox{$c_0 \in [-1,0)$}\\
  -1,~~~~ & \hbox{$c_0>0$}\\
\end{array}
\right.
\end{align*}
We can choose value of parameter $c_0\in [-1,0)$.}
\end{remark}

\section{Numerical results and discussion}
 In order to compare our numerical results with the results obtained by the ADM  \cite{duan2015oxygen}, we define the absolute residual error as
\begin{align}
Res_{1n}=\bigg|\frac{1}{ p_1(x) }\bigg(p_{1}(x)\phi_{1n}'(x)\bigg)'-f_1\big(x,\phi_{1n}(x),\phi_{2n}(x)\big)\bigg|,\\
Res_{2n}=\bigg|\frac{1}{ p_2(x) }\bigg(p_{2}(x)\phi_{2n}'(x)\bigg)'-f_2\big(x,\phi_{1n}(x),\phi_{2n}(x)\big)\bigg|,
\end{align}
\begin{align}
res_{1n}=\bigg|\frac{1}{ p_1(x) }\bigg(p_{1}(x)\psi_{1n}'(x)\bigg)'-f_1\big(x,\psi_{1n}(x),\psi_{2n}(x)\big)\bigg|,\\
res_{2n}=\bigg|\frac{1}{ p_2(x) }\bigg(p_{2}(x)\psi_{2n}'(x)\bigg)'-f_2\big(x,\psi_{1n}(x),\psi_{2n}(x)\big)\bigg|,
\end{align}
where $(\phi_{1n}(x), \phi_{2n}(x))$ are the HAM approximate solutions and $(\psi_{1n}(x), \psi_{2n}(x))$ are the ADM approximate solutions.

In Tables \ref{tab1}-- \ref{tab8}, we compare the numerical results of  $\big(\phi_{1n}(x), \phi_{2n}(x)\big)$, $\big(\psi_{1n}(x), \psi_{2n}(x)\big)$,   $\big(Res_{1n}, Res_{2n}\big)$, and $\big(res_{1n}, res_{2n}\big)$, obtained by the HAM and the ADM \cite{duan2015oxygen}. From the comparison, we observe that the convergence of the proposed method is very fast and the only the first few terms of the series provide an excellent approximation to the exact solution. The optimal choice of the convergence control parameters permits to obtain an excellent approximation.

\begin{example}\label{sec5:eq1}
{\rm Consider the system of Lane-Emden-Fowler equations that relates the concentration of the carbon substrate
and the concentration of oxygen \cite{rach2014solving,singh2020solving,wazwaz2014study}}
\begin{align}
\left\{
  \begin{array}{ll}
 \displaystyle  (x^{k_1}y'_{1})' = x^{k_1}\bigg(-b+\frac{a\ y_1 \ y_2 }{(l_{1}+y_1)(m_{1}+y_2)}+\frac{c\ y_1\ y_2 }{(l_{2}+y_1)(m_{2}+y_2)}\bigg),~x\in(0,1),\vspace{0.1cm}\\
 \displaystyle  (x^{k_2}y'_{2})' =x^{k_2}\bigg(\frac{d\  y_1 \ y_2 }{(l_{1}+y_1)(m_{1}+y_2)}+ \frac{e\ y_1\ y_2 }{(l_{2}+y_1)(m_{2}+y_2)}\bigg),\vspace{0.1cm}\\
y'_{1}(0)=0,~~y'_{2}(0)=0 ~~~~y_{1}(1)=1,~~~~y_{2}(1)=1,
\end{array} \right.
\end{align}
\end{example}
where  the functions $y_1(x)$ and $y_2(x)$ are the concentration of the carbon substrate and the concentration of oxygen, respectively.  We fix the parameters $l_1=l_2=m_1=m_2= \frac{1}{10000}, a=5,b=1,c=d= \frac{1}{10}, e=\frac{5}{100}$ and $a_1=a_2=1, b_1=b_2=0, c_1=c_2=1$. Applying  \eqref{sec2:e15} with $y_{10}=y_{20}=1$ and $k_1=k_2=1$, we find the HAM-approximations as
\begin{align*}
&\phi_{13}(x,c_{10},c_{20})=1-1.9995 c_{10}-0.9996 c_{10}^2+3.51\times 10^{-6} c_{10} c_{20}+(1.9995 c_{10}+0.999 c_{10}^2\\
&~~~~~~~-4.69\times 10^{-6} c_{10} c_{20}) x^2+(3.12\times 10^{-5} c_{10}^2+1.17\times10^{-6} c_{10} c_{20}) x^4.\\
&\phi_{23}(x,c_{10},c_{20})=1-0.074985 c_{20}+2.81\times10^{-6} c_{10} c_{20}-0.03749 c_{20}^2+(0.07498 c_{20}\\
&~~~~~~~~~~~~~-3.75\times10^{-6} c_{10} c_{20}+0.0374 c_{20}^2) x^2+(9.37\times10^{-7} c_{10} c_{20}+3.51\times10^{-8} c_{20}^2) x^4.
\end{align*}
Using  the formula \eqref{sec2:e18}, we find $c_{10}=-1.00010501, c_{20}=-1.0000443$, and  hence the HAM-approximations
\begin{align*}
\left\{
  \begin{array}{ll}
\phi_{13}(x)=1.99985-0.99988 x^2+3.24108\times 10^{-5} x^4.\\
\phi_{23}(x)=1.0375-0.0374964 x^2+9.72266\times 10^{-7} x^4.
\end{array} \right.
\end{align*}

Using  \eqref{sec2:e15} with  $y_{10}=y_{20}=1$ and $k_1=k_2=2$,  we find  the HAM-approximations as
\begin{align*}
&\phi_{13}(x,c_{10},c_{20})=1-1.333 c_{10}-0.666461 c_{10}^2+1.45\times10^{-6} c_{10} c_{20}+(1.333 c_{10}+0.6664 c_{10}^2\\
&~~~~~~~~~~-2.08\times10^{-6} c_{10} c_{20}) x^2+(0.00001665 c_{10}^2+6.24\times10^{-7} c_{10} c_{20}) x^4.\\
&\phi_{23}(x,c_{10},c_{20})=1-0.04999  c_{20}+1.16\times10^{-6} c_{10} c_{20}-0.024995 c_{20}^2+(0.04999 c_{20}\\
&~~~~~~~~~~~~-1.66\times10^{-6} c_{10}  c_{20}+0.02499 c_{20}^2) x^2+(4.99\times10^{-7} c_{10}  c_{20}+1.87\times10^{-8}  c_{20}^2) x^4.
\end{align*}
Applying  \eqref{sec2:e18}, we find $c_{10}=-0.995713, c_{20}=-0.996167$, and obtain the HAM-approximate solutions
\begin{align*}
\left\{
  \begin{array}{ll}
\phi_{13}(x)=1.66653-0.666545 x^2+1.713 \times 10^{-5} x^4.\\
\phi_{23}(x)=1.025-0.0249963 x^2+5.142\times 10^{-7} x^4.
\end{array} \right.
\end{align*}

\begin{table}[htbp]
\caption{Comparison of numerical results for $k_1=k_2=1$ of  Example \ref{sec5:eq1}}
\centering
\setlength{\tabcolsep}{0.05in}
\begin{tabular}{l|cc cc|cccc}
\hline
&& Solutions  &&    &&  Errors &\\
\cline{1-9}
$x$ & $\phi_{13}$ & $\psi_{13}$\cite{singh2020solving} & $\phi_{23}$ & $\psi_{23}$\cite{singh2020solving} &$Res_{13}$ &$res_{13}$\cite{singh2020solving} &$Res_{23}$ &$res_{23}$\cite{singh2020solving}\\
\cline{1-9}
0.1	&	1.9898484	&	1.9898484	&	1.0371204	&	1.0371204	&	2.46E-04	&	2.46E-04	&	7.40E-06	&	7.40E-06	\\
0.3	&	1.9098583	&	1.9098583	&	1.0341207	&	1.0341207	&	2.17E-04	&	2.17E-04	&	6.51E-06	&	6.51E-06	\\
0.5	&	1.7498793	&	1.7498793	&	1.0281213	&	1.0281213	&	1.61E-04	&	1.60E-04	&	4.83E-06	&	4.82E-06	\\
0.7	&	1.5099140	&	1.5099140	&	1.0191224	&	1.0191224	&	8.62E-05	&	8.62E-05	&	2.58E-06	&	2.58E-06	\\
0.9	&	1.1899659	&	1.1899659	&	1.0071239	&	1.0071239	&	1.51E-05	&	1.51E-05	&	4.55E-07	&	4.55E-07	\\
\hline
\end{tabular}
\label{tab1}
\end{table}
\begin{table}[htbp]
\caption{Comparison of numerical  results  for $k_1=k_2=2$ of  Example \ref{sec5:eq1}}
\centering
\setlength{\tabcolsep}{0.05in}
\begin{tabular}{l|cc cc|cccc}
\hline
&& Solutions  &&    &&  Errors &\\
\cline{1-9}
$x$ & $\phi_{13}$ & $\psi_{13}$\cite{duan2015oxygen}  & $\phi_{23}$ & $\psi_{23}$\cite{duan2015oxygen} &$Res_{13}$ &$res_{13}$\cite{duan2015oxygen} &$Res_{23}$ &$res_{23}$\cite{duan2015oxygen} \\
\cline{1-9}
0.1	&	1.6598623	&	1.6598747	&	1.0247458	&	1.0247462	&	5.49E-05	&	1.31E-04    &	1.65E-06	&	3.94E-06	\\
0.3	&	1.6065388	&	1.6065503	&	1.0227461	&	1.0227465	&	3.85E-05	&	1.14E-04	&	1.16E-06	&	3.44E-06	\\
0.5	&	1.4998926	&	1.4999020	&	1.0187467	&	1.0187470	&	7.73E-06	&	8.34E-05	&	2.37E-07	&	2.50E-06	\\
0.7	&	1.3399248	&	1.3399312	&	1.0127477	&	1.0127479	&	3.18E-05	&	4.31E-05	&	9.50E-07	&	1.29E-06	\\
0.9	&	1.1266376	&	1.1266400	&	1.0047491	&	1.0047492	&	6.69E-05	&	7.12E-06	&	2.00E-06	&	2.13E-07	\\
\hline
\end{tabular}
\label{tab2}
\end{table}

\begin{example}\label{sec5:eq2}
{\rm Consider the system of Lane-Emden-Fowler equation  occurs in catalytic diffusion reactions \cite{flockerzi2011coupled,rach2014solving}}
\begin{align}
\left\{
  \begin{array}{ll}
 \displaystyle   (x^{2}y'_{1})'=x^{2} (a\ y_{1}^{2}+b\ y_1 y_2),~~~~~x\in(0,1)\vspace{0.15cm}\\
 \displaystyle   (x^{2}y'_{2})'=x^{2}(c\ y_{1}^{2}-d\ y_1 y_2),\vspace{0.15cm}\\
 y'_{1}(0)=0,~~y'_{2}(0)=0,~~~y_{1}(1)=1,~~  y_{2}(1)=2.
\end{array} \right.
\end{align}
\end{example}
Here,  $k_1=k_2=2$, $a_1=a_2=1$, $b_1=b_2=0$, $c_1=1$  and $c_2=2$. Using  \eqref{sec2:e15} with  $y_{10}=1$, $y_{20}=2$ and $a=1$, $b=\frac{2}{5}$, $c= \frac{1}{2}$, $d=1$, we  find the HAM approximate solution  as
\begin{align*}
&\phi_{14}(x,c_{10},c_{20})=1+\frac{9 c_{10}}{10}+\frac{597 c_{10}^2}{500}+\frac{55973 c_{10}^3}{105000}+\frac{7 c_{10} c_{20}}{120}+\frac{1613 c_{10}^2 c_{20}}{47250}+\frac{65 c_{10} c_{20}^2}{3024}\\
&+\bigg(-\frac{9 c_{10}}{10}-\frac{33 c_{10}^2}{25}-\frac{9611 c_{10}^3}{15000}-\frac{c_{10} c_{20}}{12}-\frac{1409 c_{10}^2 c_{20}}{27000}-\frac{67 c_{10} c_{20}^2}{2160}\bigg) x^2
+\bigg(\frac{63 c_{10}^2}{500}+\frac{563 c_{10}^3}{5000}\\
&+\frac{c_{10} c_{20}}{40}+\frac{91 c_{10}^2 c_{20}}{4500}+\frac{7 c_{10} c_{20}^2}{720}\bigg) x^4
+\bigg(-\frac{173 c_{10}^3}{35000}-\frac{137 c_{10}^2 c_{20}}{63000}-\frac{c_{10} c_{20}^2}{5040}\bigg) x^6.\\
&\phi_{24}(x,c_{10},c_{20})=2+\frac{5 c_{20}}{4}+\frac{63 c_{10} c_{20}}{200}+\frac{1961 c_{10}^2 c_{20}}{14000}+\frac{67 c_{20}^2}{48}+\frac{673 c_{10} c_{20}^2}{5040}+\frac{3139 c_{20}^3}{6048}\\
&+\bigg(-\frac{5 c_{20}}{4}-\frac{9 c_{10} c_{20}}{20}-\frac{413 c_{10}^2 c_{20}}{2000}-\frac{35 c_{20}^2}{24}-\frac{713 c_{10} c_{20}^2}{3600}-\frac{487 c_{20}^3}{864}\bigg) x^2
+\bigg(\frac{27 c_{10} c_{20}}{200}+\frac{141 c_{10}^2 c_{20}}{2000}\\
&+\frac{c_{20}^2}{16}+\frac{83 c_{10} c_{20}^2}{1200}+\frac{13 c_{20}^3}{288}\bigg) x^4+\bigg(-\frac{57 c_{10}^2 c_{20}}{14000}-\frac{13 c_{10} c_{20}^2}{2800}-\frac{c_{20}^3}{2016}\bigg) x^6.
\end{align*}
Using  \eqref{sec2:e18}, we find $c_{10}=-0.767463, c_{20}=-0.789762$. Hence, the HAM approximate solution
\begin{align*}
\left\{
  \begin{array}{ll}
\phi_{14}=0.780767+0.191485 x^2+0.0244069 x^4+0.00334088 x^6.\\
\phi_{24}=1.68960+0.273372 x^2+0.0326694 x^4+0.00436072 x^6.
\end{array} \right.
\end{align*}
Using  \eqref{sec2:e15} with $a=b=c=d=1$ and \eqref{sec2:e18} with $c_{10}=-0.689796, c_{20}=-0.708697$, and obtain the HAM-approximations as
\begin{align*}
\left\{
  \begin{array}{ll}
\phi_{14}=0.674423+0.271204 x^2+0.0454739 x^4+0.00889876 x^6.\\
\phi_{24}=1.67352+0.27178 x^2+0.0455586 x^4+0.00914259 x^6.
\end{array} \right.
\end{align*}

\begin{table}[htbp]
\caption{Comparison of numerical  results  for $a=1$, $b=\frac{2}{5}$, $c= \frac{1}{2}$, $d=1$ of  Example \ref{sec5:eq2}}
\centering
\setlength{\tabcolsep}{0.04in}
\begin{tabular}{l|cc| cc|cc|cc}
\hline
&& Solutions  &&    &&  Errors &\\
\cline{1-9}
$x$ & $\phi_{14}$ & $\psi_{14}$\cite{duan2015oxygen} & $\phi_{24}$ & $\psi_{24}$\cite{duan2015oxygen}  &$Res_{14}$ &$res_{14}$\cite{duan2015oxygen}  &$Res_{24}$ &$res_{24}$\cite{duan2015oxygen}\\
\cline{1-9}
0.1	&	0.7826843&	0.7658317	&	1.6923350	&	1.6713156	&	1.13E-02  &	2.26E-01   &	1.59E-02	&	7.63E-01	\\
0.3	&	0.7982008&	0.7835530	&	1.7144693	&	1.6962143	&	9.45E-03  &	1.95E-01   &	1.34E-02	&	7.54E-01	\\
0.5	&	0.8302159&	0.8194185	&	1.7600510	&	1.7466228	&	5.96E-03  &	1.41E-01   &	9.17E-03     &	7.21E-01	\\
0.7	&	0.8808479&	0.8746115	&	1.8319072	&	1.8241881	&	4.43E-04  &	7.71E-02   &	2.78E-03 	&	6.85E-01	\\
0.9	&	0.9536588&	0.9517495	&	1.9347811	&	1.9324416	&	1.11E-02  &	2.06E-02   &	1.02E-02	&   6.77E-01	\\
\hline
\end{tabular}
\label{tab3}
\end{table}

\begin{table}[htbp]
\caption{Comparison of numerical  results  for  $a=b=c=d=1$ of  Example \ref{sec5:eq2}}
\centering
\setlength{\tabcolsep}{0.05in}
\begin{tabular}{l|cc cc|cccc}
\hline
&& Solutions  &&    &&  Errors &\\
\cline{1-9}
$x$ & $\phi_{14}$ & $\psi_{14}$\cite{duan2015oxygen} & $\phi_{24}$ & $\psi_{24}$\cite{duan2015oxygen} &$Res_{14}$ &$res_{14}$\cite{duan2015oxygen} &$Res_{24}$ &$res_{24}$\cite{duan2015oxygen} \\
\cline{1-9}
0.1	&	0.6771397   &	0.5967530	&	1.6762408	&	1.5967530	&	4.27E-02	&	1.143631&	4.62E-02 	&	1.1436	\\
0.3	&	0.6992063	&	0.6293170	&	1.6983544	&	1.6293170	&	3.57E-02	&	0.99354	&	3.94E-02	&	0.9935	\\
0.5	&	0.7452053	&	0.6936848	&	1.7444538	&	1.6936848	&	2.26E-02	&	0.72673	&	2.71E-02	&	0.7267	\\
0.7	&	0.8192784	&	0.7895710	&	1.8187051	&	1.7895710	&	1.36E-03	&	0.40619	&	8.10E-03	&	0.4061	\\
0.9	&	0.9286631	&	0.9196325	&	1.9284103	&	1.9196325	&	4.41E-02    &	0.11286	&	3.25E-02	&	0.1128	\\
\hline
\end{tabular}
\label{tab4}
\end{table}

%%%%%%%%%%%%%%%%%%%%%%%%%%%%%%%%%%%%%%%%%%%%%%%%%%%%%%%
\begin{example}\label{sec5:eq3}
{\rm Consider the following system of Lane-Emden-Fowler with boundary conditions}
\begin{align}
\left\{
  \begin{array}{ll}
 \displaystyle  (x^{3}y'_{1})'=x^{3} (y_{1}y_{2} +7+(y_{1}-1)^2),~~~~~~~x\in(0,1),\vspace{0.1cm}\\
 \displaystyle  (x^{4}y'_{2})'=x^{4}(y_{1} y_{2} -11+(y_{2}-1)^2),\vspace{0.1cm}\\
y'_{1}(0)=0,~~y'_{2}(0)=0,~~y_{1}(1)=2,~~y_{2}(1)=0.
\end{array} \right.
\end{align}
\end{example}
The exact solutions are $y_1(x)=3-x^2$ and $y_2(x)=-1+x^2.$ Here, $a_1=a_2=1$, $b_1=b_2=0$ and $c_1=2$ and $c_2=0$. Using \eqref{sec2:e15} with  $y_{10}=2$ and  $y_{20}=0$, we find  the series solutions as
\begin{align*}
&\phi_{13}(x,c_{10},c_{20})=2-3 c_{10}-\frac{5 c_{10}^2}{2}-\frac{217 c_{10}^3}{288}-\frac{c_{10} c_{20}}{2}-\frac{23 c_{10}^2 c_{20}}{288}-\frac{c_{10} c_{20}^2}{6}
+\bigg(3 c_{10}+\frac{9 c_{10}^2}{4}\\
&+\frac{2 c_{10}^3}{3}+\frac{3 c_{10} c_{20}}{4}+\frac{c_{10}^2 c_{20}}{12}+\frac{c_{10} c_{20}^2}{4}\bigg) x^2+\left(\frac{c_{10}^2}{4}+\frac{c_{10}^3}{16}-\frac{c_{10} c_{20}}{4}+\frac{c_{10}^2 c_{20}}{48}-\frac{c_{10} c_{20}^2}{12}\right) x^4\\
&~~~~~~~~+\left(\frac{7 c_{10}^3}{288}-\frac{7 c_{10}^2 c_{20}}{288}\right) x^6.\\
&\phi_{23}(x,c_{10},c_{20})=3 c_{20}+3 c_{20}^2+\frac{89 c_{10} c_{20}^2}{1890}+\frac{1801 c_{20}^3}{1890}+\left(-3 c_{20}-3 c_{20}^2-\frac{c_{10} c_{20}^2}{10}-\frac{9 c_{20}^3}{10}\right)x^2\\
&~~~~~~~~+\left(\frac{c_{10} c_{20}^2}{14}-\frac{c_{20}^3}{14}\right) x^4+\left(-\frac{c_{10} c_{20}^2}{54}+\frac{c_{20}^3}{54}\right) x^6.
\end{align*}
Applying  \eqref{sec2:e18}, we find $c_{10}=c_{20}=-1$, and obtain the exact solutions
\begin{align*}
\phi_{13}(x)=3-x^2,~~~~~~~\phi_{23}(x)=-1+x^2.
\end{align*}

\begin{example}\label{sec5:eq4}
{\rm Consider the following system of Lane-Emden-Fowler with boundary conditions}
\begin{align}
\left\{
  \begin{array}{ll}
 \displaystyle  (x^{5}y'_{1})'= x^{5}(-8e^{y_{1}}-16e^{\frac{-y_{2}}{2}}),~~~~~~~x\in(0,1),\vspace{0.1cm}\\
 \displaystyle  (x^{3}y'_{1})'= x^{3} (8 e^{-y_{2}}+8e^{\frac{y_{1}}{2}}),\vspace{0.1cm}\\
y'_{1}(0)=0,~~y'_{2}(0)=0,~~y_{1}(1)=-2\ln2,~~y_{2}(1)=2\ln2.
\end{array} \right.
\end{align}
\end{example}
The exact solutions are $y_1(x)=-2 \ln(1+x^2)$ and $y_2(x)=2\ln(1+x^2).$ Here, $k_1=5$, $k_2=3$, $a_1=a_2=1$, $b_1=b_2=0$, $c_1=-2\ln2$ and  $c_2=2\ln2$. Making use of \eqref{sec2:e15} and  \eqref{sec2:e18} with  $y_{10}=-2\ln2$, $y_{20}=2\ln2$, we obtain
the HAM-approximations  with ($c_{10}= -0.763735, c_{20}=-0.743226$) as
\begin{align*}
\left\{
  \begin{array}{ll}
&\phi_{15}(x)=-2.05166+0.587014 x^2+0.0661967 x^4+0.00974409 x^6+0.00215741 x^8\\
&~~~~~~+0.000103467 x^{10}+0.000153951 x^{12}.\\
&\phi_{25}(x)=1.95556-0.499767 x^2-0.0589602 x^4-0.00851556 x^6-0.00181135 x^8\\
&~~~~~~~-0.000100184 x^{10}-0.000115625 x^{12}.
\end{array} \right.
\end{align*}

\begin{table}[htbp]
\caption{Comparison of numerical  results  for $k_1=5$,  $k_2=3$ of  Example \ref{sec5:eq4}}
\centering
\setlength{\tabcolsep}{0.05in}
\begin{tabular}{l|cc cc|cccc}
\hline
&& Solutions  &&    &&  Errors &\\
\cline{1-9}
$x$ & $\phi_{15}$ & $\psi_{15}$\cite{duan2015oxygen}  & $\phi_{25}$ & $\psi_{25}$\cite{duan2015oxygen} &$Res_{25}$ &$res_{25}$\cite{duan2015oxygen} &$Res_{25}$ &$res_{25}$\cite{duan2015oxygen} \\
\cline{1-9}
0.1	&	-2.0457870	&	-2.0358737	&	1.9505604	&	1.9379913	&	2.20E-03   &	0.398753	&	1.65E-03	&	0.290684	\\
0.3	&	-1.9982891	&	-1.9904854	&	1.9101010	&	1.8999319	&	1.51E-03	&	0.302292	&	1.17E-03	&	0.219317	\\
0.5	&	-1.9006122	&	-1.8958769	&	1.8267970   &	1.8202489	&	7.51E-04	&	0.168370	&	6.34E-04	&	0.120416	\\
0.7	&	-1.7468574	&	-1.7447898	&	1.6954112	&	1.6922536	&	3.74E-04	&	6.38E-02   &	3.94E-04	&	4.39E-02\\
0.9	&	-1.5265642	&	-1.5261201	&	1.5066963	&	1.5059088	&	6.69E-04	&	1.18E-02	&	7.42E-04	&	7.51E-03	\\
\hline
\end{tabular}
\label{tab5}
\end{table}

%%%%%%%%%%%%%%%%%%%%%%%%%%%%%%%%%%%%%%%%%%

\begin{example}\label{sec5:eq5}
{\rm  Consider the following system of  Lane-Emden-Fowler with boundary conditions}
\begin{align}
\left\{
  \begin{array}{ll}
 \displaystyle  (x^{2}y'_{1})'=x^{2}(2 \left(7+e^{y_{2}} \right)e^{-2y_1 }),~~~~~~~x\in(0,1),\vspace{0.1cm}\\
 \displaystyle  (x^{2}y'_{2})'=x^{2}(2 \left(11+e^{y_{1}} \right)e^{-2y_{2}}),\vspace{0.1cm}\\
y'_{1}(0)=0,~~y'_{2}(0)=0,~~~y_{1}(1)=\ln 4,~~y_{2}(1)=\ln 5.
\end{array} \right.
\end{align}
\end{example}
The exact solutions are $y_1(x)=\ln(4+x^2)$  and $y_2(x)=\ln(5+x^2).$ Here,  $k_1=k_2=2$, $a_1=a_2=1$, $b_1=b_2=0$, $c_1=\ln 4$ and  $c_2=\ln 5$. Using \eqref{sec2:e15} and  \eqref{sec2:e18} with  $y_{10}=\ln 4$, $y_{20}=\ln 5$, we find  the HAM-approximate series solutions with
($c_{10}=-0.766209, c_{20}= -0.800994$) as
 \begin{align*}
 \left\{
  \begin{array}{ll}
\phi_{14}=1.58464-0.18032 x^2-0.0160564 x^4-0.00163913 x^6-0.000328184 x^8.\\
\phi_{24}=1.77423-0.152098 x^2-0.0114857 x^4-0.00103977 x^6-0.00016868 x^8.
\end{array} \right.
\end{align*}
\begin{table}[htbp]
\caption{Comparison of numerical  results  for $k_1=k_2=2$ of  Example \ref{sec5:eq5}}
\centering
\setlength{\tabcolsep}{0.05in}
\begin{tabular}{l|cc cc|cccc}
\hline
&& Solutions  &&    &&  Errors &\\
\cline{1-9}
$x$ & $\phi_{14}$ & $\psi_{14}$\cite{duan2015oxygen}  & $\phi_{24}$ & $\psi_{24}$\cite{duan2015oxygen} &$Res_{24}$ &$res_{24}$\cite{duan2015oxygen} &$Res_{24}$ &$res_{24}$\cite{duan2015oxygen} \\
\cline{1-9}
0.1	&	1.5828329	&	1.5769131	&	1.7727080	&	1.7709758	&	2.14E-03	&	8.30E-02	&	9.45E-04	&	2.33E-02	\\
0.3	&	1.5682776	&	1.5632335	&	1.7604475	&	1.7589804	&	1.77E-03	&	6.88E-02	&	7.83E-04	&	1.90E-02	\\
0.5	&	1.5385273	&	1.5349546	&	1.7354708	&	1.7344446	&	1.31E-03	&	4.57E-02	&	5.90E-04	&	1.21E-02	\\
0.7	&	1.4922141	&	1.4902763	&	1.6968123	&	1.6962663	&	1.23E-03	&	2.21E-02	&	5.60E-04	&	5.48E-03	\\
0.9	&	1.4270318	&	1.4264972	&	1.6428697	&	1.6427234	&	2.46E-03	&	5.17E-03	&	1.06E-03	&	1.12E-03	\\
\hline
\end{tabular}
\label{tab6}
\end{table}

\begin{example}\label{sec5:eq6}
{\rm Consider the following system of Lane-Emden-Fowler with boundary conditions}
\begin{align}
\left\{
  \begin{array}{ll}
 \displaystyle  (x^{2}y'_{1})'= x^{2}\bigg(-6 \left(e^{\frac{y_2}{3}}+4\right)e^{\frac{2y_1}{3}}\bigg),~~~~~~~x\in(0,1),\vspace{0.1cm}\\
 \displaystyle (x^{2}y'_{2})'=x^{2}\bigg(6 \left(e^{\frac{-y_1}{3}}+4\right)e^{\frac{-2y_2}{3}}\bigg),\vspace{0.1cm}\\
y'_{1}(0)=0,~~y'_{2}(0)=0,~~~y_{1}(1)=-3\ln 3,~~ y_{2}(1)=3\ln 3.
\end{array} \right.
\end{align}
\end{example}
The exact solutions are $y_1=-3 \ln(2+x^2),~~y_2=3\ln(2+x^2).$ Here, $k_1=k_2=2$, $a_1=a_2=1$, $b_1=b_2=0$, $c_1=-3\ln 3$ and $c_2=3\ln 3$. In view of \eqref{sec2:e15}  and  \eqref{sec2:e18} with  $y_{10}=-3\ln 3$, $y_{20}=3\ln 3$, we find  the HAM-approximate series solutions  with ($c_{10}=-0.764679, c_{20}=-0.764679$)  as
\begin{align*}
\left\{
  \begin{array}{ll}
\phi_{14}(x)=-3.91525+0.564146 x^2+0.049265 x^4+0.00500793 x^6+0.00099786 x^8.\\
\phi_{24}(x)=3.91525-0.564146 x^2-0.049265 x^4-0.00500793 x^6-0.00099786 x^8.
\end{array} \right.
\end{align*}
\begin{table}[htbp]
\caption{Comparison of numerical  results   for $k_1=2$, $k_2=2$ of  Example \ref{sec5:eq6}}
\centering
\setlength{\tabcolsep}{0.03in}
\begin{tabular}{l|cc cc|cccc}
\hline
&& Solutions  &&    &&  Errors &\\
\cline{1-9}
$t$ & $\phi_{14}$ & $\psi_{14}$\cite{duan2015oxygen}  & $\phi_{24}$ & $\psi_{24}$\cite{duan2015oxygen} &$Res_{24}$ &$res_{24}$\cite{duan2015oxygen} &$Res_{24}$ &$res_{24}$\cite{duan2015oxygen} \\
\cline{1-9}
0.1	&	-3.9096075	&	-3.8933979	&	3.9096075	&	3.8933979	&	6.39E-03	&	0.225756	&	6.39E-03	&	2.25E-02	\\
0.3	&	-3.8640780	&	-3.8502979	&	3.8640780	&	3.8502979	&	5.34E-03	&	0.186062	&	5.34E-03	&	5.58E-02	\\
0.5	&	-3.7710561	&	-3.7613414	&	3.7710561	&	3.7613414	&	4.11E-03	&	0.122139	&	4.11E-03	&	6.10E-02	\\
0.7	&	-3.6263470	&	-3.6211160	&	3.6263470	&	3.6211160	&	4.05E-03	&	5.78E-02	&	4.05E-03	&	4.04E-02	\\
0.9	&	-3.4228817	&	-3.4214542	&	3.4228817	&	3.4214542	&	8.02E-03	&	1.30E-02	&	8.02E-03	&	1.17E-02	\\
\hline
\end{tabular}
\label{tab7}
\end{table}

%%%%%%%%%%%%%%%%%%%%%%%%%%%%%%%%%%%%%%%%%%%%%%%%%%%%%%%%%%%%%%%%%%%%%
\begin{example}\label{sec5:eq7}
{\rm Consider the following system of Lane-Emden-Fowler with boundary conditions}
\begin{align}
\left\{
  \begin{array}{ll}
 \displaystyle  (x^{3}y'_{1})'= x^{3}(-\left(3+y_2^2\right)y_1^5),~~~~~~~x\in(0,1),\vspace{0.1cm}\\
 \displaystyle  (x^{4}y'_{1})'= x^{4}(\left(4y_1^{-2}+1\right)y_2^{-3}),\vspace{0.1cm}\\
y'_{1}(0)=0,~~y'_{2}(0)=0,~~~y_{1}(1)=\frac{1}{\sqrt{2}},~~~~y_{2}(1)=\sqrt{2}.
\end{array} \right.
\end{align}
\end{example}
The exact solutions are $y_1(x)=\frac{1}{\sqrt{1+x^2}}$ and $y_2(x)=\sqrt{1+x^2}.$ Here, $a_1=a_2=1$, $b_1=b_2=0$, $c_1=\frac{1}{\sqrt{2}}$ and $c_2=\sqrt{2}$. In view of \eqref{sec2:e15} and \eqref{sec2:e18}  with $y_{10}=\frac{1}{\sqrt{2}}$, $y_{20}=\sqrt{2}$, we find  the HAM-approximate series solutions with  ($c_{10}=-0.718977, c_{20}=-0.726659$), as
\begin{align*}
\left\{
  \begin{array}{ll}
\phi_{14}(x)=0.626038+0.0695736 x^2+0.0099351 x^4+0.00129509 x^6+0.000264696 x^8.\\
\phi_{24}(x)=1.67507-0.237503 x^2-0.0198153 x^4-0.00227264 x^6-0.0012686 x^8.
\end{array} \right.
\end{align*}

\begin{table}[htbp]
\caption{Comparison of numerical  results   for $k_1=3$, $k_2=4$ of  Example \ref{sec5:eq7}}
\centering
\setlength{\tabcolsep}{0.03in}
\begin{tabular}{l|cc cc|cccc}
\hline
&& Solutions  &&    &&  Errors &\\
\cline{1-9}
$t$ & $\phi_{14}$ & $\psi_{14}$\cite{duan2015oxygen}  & $\phi_{24}$ & $\psi_{24}$\cite{duan2015oxygen} &$Res_{24}$ &$res_{24}$\cite{duan2015oxygen} &$Res_{24}$ &$res_{24}$\cite{duan2015oxygen} \\
\cline{1-9}
0.1	&	0.6267350	&	0.6326026	&	1.6726961	&	1.6660461	&	1.67E-03	&	0.123810	&	8.99E-03	&	0.195077	\\
0.3	&	0.6323813	&	0.6373236	&	1.6535356	&	1.6483411	&	1.35E-03	&	0.103217	&	7.53E-03	&	0.146187	\\
0.5	&	0.6440739	&	0.6474853	&	1.6144184	&	1.6113861	&	9.66E-04	&	0.069423	&	6.11E-03	&   7.46E-02	\\
0.7	&	0.6626824	&	0.6644484	&	1.5535984	&	1.5524363	&	1.01E-03	&	0.034154	&	6.20E-03	&	1.78E-02	\\
0.9	&	0.6897135	&	0.6901626	&	1.4679409	&	1.4677736	&	2.79E-03	&	0.008057	&	1.01E-02	&	1.44E-03	\\
\hline
\end{tabular}
\label{tab8}
\end{table}

\section{Concluding remarks}
The theory of a system of  singular differential equations finds its vital presence in many of the natural or physical processes such as   catalytic diffusion reactions \cite{rach2014solving}, and some system of  Lane-Emden equations that relate the concentration of the carbon substrate and the concentration of oxygen \cite{duan2015oxygen}. In this paper, a general analytical approach has been presented for the approximate series solution of system of singular differential equations with Neumann and Robin type boundary conditions. Unlike the Adomian decomposition method, the proposed technique does not require the computation of unknown constants, and it contains convergence parameters which ensure the fast convergence of the series solution. Unlike, the other numerical methods,   our approach does not require any linearization or discretization of variables. Numerical results obtained by the present method are better than the results obtained by the Adomian decomposition method reported in  \cite{duan2015oxygen}, as shown in Tables \ref{tab1}-\ref{tab8}.   Convergence and error estimation of the homotopy analysis method for a system of singular boundary value problems is provided under quite general conditions.

%\bibliographystyle{elsarticle-num}
%\bibliography{randhir_Ham}

\end{document}